\definecolor{dark-red}{rgb}{0.5,0.15,0.15}
\definecolor{dark-blue}{rgb}{0.15,0.15,0.6}
\definecolor{dark-green}{rgb}{0.15,0.6,0.15}
\newtheorem{thmx}{Theorem}
\newtheorem{Thm}[equation]{Theorem}
\newtheorem*{Thm*}{Theorem}
\newtheorem*{MainThm*}{Main Theorem}
\newtheorem{Prop}[equation]{Proposition}
\newtheorem{Lem}[equation]{Lemma}
\newtheorem{Cor}[equation]{Corollary}
\newtheorem*{Cor*}{Corollary}
\newtheorem*{Que*}{Question}
\theoremstyle{remark}
\newtheorem{Def}[equation]{Definition}
\newtheorem{Exa}[equation]{Example}
\newtheorem{Rem}[equation]{Remark}
\tikzset{
    labelrotatebelow/.style={anchor=north, rotate=90, inner sep=1.0mm}
}
\tikzset{
    labelrotateabove/.style={anchor=south, rotate=90, inner sep=1.0mm}
}
\newcommand{\nc}{\newcommand}
\nc{\dmo}{\DeclareMathOperator}
\nc{\overbar}[1]{\mkern 1.5mu\overline{\mkern-1.5mu#1\mkern-1.5mu}\mkern 1.5mu}
\nc{\kappaaux}{g}
\nc{\kappaCh}{{\kappaaux(\cat C_h)}}
\nc{\kappam}{{\kappaaux({\mathfrak m})}}
\nc{\kappaP}{\Gamma_{\cat P}\unit}
\nc{\kappaQ}{{\kappaaux(\cat Q)}}
\nc{\kappaCP}{{\kappaaux_{\cat C}(\cat P)}}
\nc{\kappaDP}{{\kappaaux_{\cat D}(\cat P)}}
\nc{\kappaCQ}{{\kappaaux_{\cat C}(\cat Q)}}
\nc{\kappaDQ}{{\kappaaux_{\cat D}(\cat Q)}}
\nc{\kappaphiB}{{\kappaaux(\phi(\cat B))}}
\nc{\kappaphiQ}{{\kappaaux(\varphi(\cat Q))}}
\dmo{\Sub}{Sub}
\dmo{\Proj}{Proj}
\dmo{\LMod}{LMod}
\dmo{\cell}{cell}
\nc{\Prst}{{\cat P}\mathrm{r^{st}}}
\nc{\Mack}[2]{\mathrm{Mack}_{#1}(#2)}
\dmo{\fin}{{fin}}
\dmo{\Sphere}{\mathbb{S}}
\dmo{\Alg}{Alg}
\dmo{\CAlg}{CAlg}
\nc{\HA}{{\rmH \hspace{-0.2em}\bbA}}
\nc{\HZ}{{\rmH \hspace{-0.2em}\bbZ}}
\nc{\HZbar}{{\rmH \hspace{-0.2em}\underline{\bbZ}}}
\nc{\Fp}{{\bbF_{\hspace{-0.1em}p}}}
\nc{\HFp}{{\rmH \hspace{-0.15em}\bbF_{\hspace{-0.1em}p}}}
\nc{\mathfrakp}{\mathfrak{p}}
\nc{\mathfrakq}{\mathfrak{q}}
\nc{\mathfrakS}{\mathfrak{S}}
\nc{\mathfrakT}{\mathfrak{T}}
\nc{\Z}{\mathbb{Z}}
\nc{\cF}{\mathcal{F}}
\nc{\hspec}[1]{\Spc^\mathrm{h}({#1})}
\dmo{\Id}{Id}
\dmo{\Loc}{Loc}
\dmo{\Spc}{Spc}
\nc{\thickid}{\mathrm{thickid}}
\nc{\thick}[1]{\mathrm{thick}\langle #1 \rangle}
\nc{\thickt}[1]{\langle #1 \rangle_\otimes}
\nc{\loct}[1]{\mathrm{Loc}_\otimes\langle #1 \rangle}
\dmo{\End}{End}
\dmo{\Mor}{Mor}
\dmo{\Hom}{Hom}
\dmo{\id}{id}
\dmo{\im}{im}
\dmo{\Ker}{Ker}
\dmo{\ind}{ind}
\dmo{\Ind}{Ind}
\dmo{\CoInd}{coind}
\dmo{\res}{res}
\dmo{\infl}{infl}
\dmo{\triv}{triv}
\dmo{\Tel}{Tel} 
\dmo{\grMod}{grMod}%
\dmo{\Mod}{Mod}%
\dmo{\opname}{op}
\dmo{\SH}{\mathcal{S}\mathcal{H}}
\dmo{\Spec}{Spec}
\dmo{\supp}{supp}
\dmo{\Supp}{Supp}
\dmo{\cosupp}{cosupp}
\dmo{\Cosupp}{Cosupp}
\nc{\bbL}{\mathbb{L}}
\nc{\bbA}{\mathbb{A}}
\nc{\bbE}{\mathbb{E}}
\nc{\bbN}{\mathbb{N}}
\nc{\bbQ}{\mathbb{Q}}
\nc{\bbZ}{\mathbb{Z}}
\nc{\bbF}{\mathbb{F}}
\nc{\bbS}{\mathbb{S}}
\nc{\cat}[1]{\mathscr{#1}}
\nc{\cA}{\mathcal{A}}
\nc{\cB}{\mathcal{B}}
\nc{\cC}{\mathcal{C}}
\nc{\cD}{\mathcal{D}}
\nc{\cE}{\mathcal{E}}
\nc{\cU}{\mathcal{U}}
\nc{\CB}{\mathbf{B}}
\nc{\sD}{\mathsf{D}}
\nc{\Mid}{\,\big|\,}
\nc{\SET}[2]{\big\{\,#1\Mid#2\,\big\}}
\nc{\unit}{\mathbb{1}}
\nc{\xra}{\xrightarrow}
\dmo{\Sp}{Sp}
\dmo{\Ho}{Ho}
\dmo{\Fin}{Fin}
\dmo{\add}{add}
\dmo{\Fun}{Fun}
\dmo{\Ext}{Ext}
\dmo{\Map}{Map}
\dmo{\Span}{Span}
\dmo{\N}{N}
\dmo{\Cat}{Cat}
\dmo{\colim}{colim}
\dmo{\hocolim}{hocolim}
\dmo{\Ch}{Ch}
\dmo{\Gr}{Gr}
\nc{\CA}{\cat A}
\nc{\CU}{\cat U}
\dmo{\dual}{dual}
\dmo{\Perf}{Perf}
\dmo{\rk}{rk}
\dmo{\cons}{cons}
\renewcommand{\leq}{\leqslant}
\DeclareMathOperator{\inv}{inv}
\DeclareMathOperator{\gen}{gen}
\newcounter{enum-resume-hack}
\Crefname{Thm}{Theorem}{Theorems}
\Crefname{Prop}{Proposition}{Propositions}
\Crefname{Lem}{Lemma}{Lemmas}
\Crefname{thmx}{Theorem}{Theorems}
\begin{document}

\title[Cohen's theorem in tt-geometry]{Cohen's theorem in tensor triangular geometry}

\author[Tobias Barthel]{Tobias Barthel}

\date{\today}

\makeatletter
\patchcmd{\@setaddresses}{\indent}{\noindent}{}{}
\patchcmd{\@setaddresses}{\indent}{\noindent}{}{}
\patchcmd{\@setaddresses}{\indent}{\noindent}{}{}
\patchcmd{\@setaddresses}{\indent}{\noindent}{}{}
\makeatother

\address{Tobias Barthel, Max Planck Institute for Mathematics, Vivatsgasse 7, 53111 Bonn, Germany}
\email{tbarthel@mpim-bonn.mpg.de}
\urladdr{https://sites.google.com/view/tobiasbarthel/}

\begin{abstract} 
    A theorem of Cohen from 1950 states that a commutative ring is Noetherian if and only if every prime ideal is finitely generated. In this note, we establish analogues of this result in tensor triangular geometry. In particular, for an essentially small tensor triangulated category $\mathscr{K}$ with weakly Noetherian spectrum, we show that every prime ideal in $\mathscr{K}$ can be generated by finitely many objects if and only if the set of prime ideals of $\mathscr{K}$ is finite. 
\end{abstract}


\maketitle

\setcounter{tocdepth}{1}

\subsection*{Introduction} Let $p$ be a prime number and let $\Sp_{(p)}^{\fin}$ be the stable homotopy category of finite $p$-local spectra. The seminal thick subcategory theorem \cite{HopkinsSmith1998} classifies all thick subcategories of $\Sp_{(p)}^{\fin}$: they form a strictly decreasing sequence
    \[
        \Sp_{(p)}^{\fin} = \cat C_0 \supset \cat C_1 \supset \cat C_2 \supset \ldots \supset \cat C_{\infty} = (0),
    \]
with one $\cat C_n$ for any $n\in \mathbb{N} \cup \{\infty\}$. In applications, it is then of paramount importance that each of the thick subcategories $\cat C_n$ can be generated by a single object, namely a ($p$-local) finite spectrum of type $n$. As such, the thick subcategory theorem gives a classification of finite spectra in terms of type and up to the triangular operations of direct sum, retract, and taking cofibers. 

Tensor triangular geometry \cite{Balmer2005} provides a uniform framework for studying analogous classification problems for any tensor triangulated category $\cat K$, with prominent examples appearing in algebraic geometry, (equivariant stable) homotopy theory, representation theory, motivic homotopy theory, or symplectic geometry. The spectrum $\Spc(\cat K)$ of $\cat K$ encodes the classification of radical thick tensor ideals of $\cat K$. Motivated by the example of $\Sp_{(p)}^{\fin}$, this note studies the question:

\begin{center}
    \emph{When are all thick tensor ideals of a tt-category $\cat K$ finitely generated?}
\end{center}
    
Surprisingly, it turns out that this is rarely the case. In fact, under a mild point-set topological condition on the spectrum of $\cat K$ which holds for many examples commonly studied in tt-geometry, this already forces the set of all prime ideals in $\cat K$ to be finite. More precisely, recall that in \cite{BHS2023} we introduced the notion of weakly Noetherian spectral space as a common generalization of Noetherian and profinite. This notion is sufficient for setting up a theory of stratification for rigidly-compactly generated tt-categories $\cat T$ relative to their spectrum $\Spc(\cat T^{\omega})$, while more recently, Zou \cite{Zou2023pp} showed that it is also necessary. It captures a large collection of tt-categories that have recently been the focus of research. With that preparation, we can state a version of the main result of this note:

\begin{thmx}\label{thmx:main}
    Suppose $\cat K$ is an essentially small tt-category with weakly Noetherian spectrum. 
    Then the following are equivalent:
        \begin{enumerate}
            \item Every radical ideal in $\cat K$ is finitely generated;
            \item $\Spc(\cat K)$ is finite.
        \end{enumerate}
\end{thmx}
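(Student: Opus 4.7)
The plan is to first translate condition (1) into a purely topological statement via Balmer's classification theorem, and then handle the hard direction by passing to the Hochster dual of $X := \Spc(\cat K)$. Balmer's classification identifies radical thick tensor ideals of $\cat K$ with Thomason subsets of $X$; under this identification, a finitely generated radical ideal $\sqrt{\langle a_1, \ldots, a_n\rangle}$ corresponds to $\supp(a_1) \cup \cdots \cup \supp(a_n)$. Since a finite union of Thomason closed subsets is again Thomason closed, and conversely every Thomason closed subset is the support of a single object, condition (1) becomes the topological statement
\begin{equation*}
    (\star) \quad \text{every Thomason subset of } X \text{ is Thomason closed.}
\end{equation*}
The implication (2) $\Rightarrow$ (1) is immediate from this: in a finite spectral space every subset is quasi-compact, so every Thomason subset, being a finite union of closed sets, is itself closed with quasi-compact complement.

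For the hard direction (1) $\Rightarrow$ (2), I pass to the Hochster dual $X^\vee$. Recall that the quasi-compact opens of $X^\vee$ are the Thomason closed subsets of $X$, so the opens of $X^\vee$ are exactly the Thomason subsets of $X$. The condition $(\star)$ thus translates to: every open of $X^\vee$ is quasi-compact, i.e., $X^\vee$ is a Noetherian spectral space. Moreover, from a weakly visible presentation $\{x\} = Y \cap U$ with $Y$ Thomason closed in $X$ and $U$ a Thomason (i.e., quasi-compact) open in $X$, one observes that $Y$ is a quasi-compact open in $X^\vee$ while $U$, as a single quasi-compact open of $X$, is closed in $X^\vee$. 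Hence $\{x\}$ is locally closed in $X^\vee$, so weak Noetherianity of $X$ forces $X^\vee$ to be a $T_D$-space.

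It then remains to prove that any Noetherian $T_D$ spectral space is finite. I would argue this by Noetherian induction on closed subsets: assume for contradiction that a minimal non-finite closed subset $Z$ exists; then $Z$ must be irreducible (otherwise each of its proper irreducible components is finite by minimality, forcing $Z$ itself to be finite); its unique generic point $\eta$ satisfies $\{\eta\} = V \cap F$ for some open $V$ and closed $F$ in $Z$; since any closed subset of $Z$ containing $\eta$ must contain $\overline{\{\eta\}}^Z = Z$, we obtain $F = Z$, whence $\{\eta\} = V$ is open in $Z$; but then $Z \setminus \{\eta\}$ is a proper closed subset of $Z$, finite by minimality, making $Z$ finite---contradiction. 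Consequently $X^\vee$, and therefore $X$, is finite. The main technical point I anticipate having to nail down carefully is the Hochster-duality translation, in particular the fact that an individual quasi-compact open of $X$ appears as a closed subset of $X^\vee$, since this is the key input that upgrades weak visibility in $X$ to local closedness in $X^\vee$.
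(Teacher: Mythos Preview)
Your argument is correct and follows the same route as the paper: translate condition~(1) via Balmer's classification and Hochster duality into the statement that $X^\vee = \Spc(\cat K)_{\inv}$ is Noetherian, translate weak Noetherianity into $X^\vee$ being $T_D$, and then conclude finiteness. One minor slip: a general weakly visible presentation is $\{x\} = W_1 \cap (X\setminus W_2)$ with $W_1,W_2$ \emph{arbitrary} Thomason subsets, not of the restricted form you state; but since Thomason subsets are exactly the opens of $X^\vee$, local closedness of $\{x\}$ in $X^\vee$ follows even more directly (and your restricted form is in any case recoverable once $X^\vee$ is known to be Noetherian). The only substantive difference from the paper is the endgame: the paper deduces finiteness via the constructible topology (Noetherian plus $T_D$ on $X^\vee$ forces every point of $X_{\cons}$ to be isolated, hence $X_{\cons}$ is discrete and thus finite by compactness, citing \cite[Corollary~8.1.19]{DST2019}), whereas your Noetherian induction on irreducible closed subsets of $X^\vee$ is a self-contained elementary alternative that avoids the patch topology altogether.
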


As the example of $p$-local finite spectra demonstrates, satisfying Condition $(a)$ but not $(b)$, this equivalence can fail without the weakly Noetherian hypothesis. Note that, in most examples of tt-categories arising in nature, all ideals are radical; for a sharper formulation of \cref{thmx:main}, see \cref{thm:main}. The proof of this result relies on Hochster duality to translate Condition $(a)$ into the statement that $\Spc(\cat K)$ is inverse-Noetherian; this does not require any assumption on the spectrum and forms a tt-geometric version of Cohen's theorem (\cref{prop:fgttcharacterization}):

\begin{thmx}\label{thmx:ttcohen}
    For a tt-category $\cat K$, the following are equivalent:
        \begin{enumerate}
            \item Every radical ideal in $\cat K$ is finitely generated;
            \item every prime ideal in $\cat K$ is finitely generated;
            \item $\Spc(\cat K)$ is inverse-Noetherian.
        \end{enumerate}
\end{thmx}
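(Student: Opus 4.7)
The plan is to derive (a) $\Leftrightarrow$ (c) from Balmer's classification via Hochster duality, and to handle the content-bearing implication (b) $\Rightarrow$ (a) by a tt-adaptation of Cohen's Zorn-lemma argument; the direction (a) $\Rightarrow$ (b) is immediate since prime ideals are radical.

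For (a) $\Leftrightarrow$ (c), Balmer's classification identifies radical tt-ideals of $\cat K$ with Thomason subsets of $\Spc(\cat K)$ via $J \mapsto \supp(J)$. Since $\supp(x_1 \oplus \cdots \oplus x_n) = \supp(x_1) \cup \cdots \cup \supp(x_n)$, a radical tt-ideal is finitely generated precisely when its support has the form $\supp(g)$ for a single $g \in \cat K$, equivalently when it is closed with quasi-compact complement. Under Hochster duality these subsets are exactly the quasi-compact opens of $\Spc(\cat K)^\vee$, so (a) becomes the assertion that every open of $\Spc(\cat K)^\vee$ is quasi-compact, i.e.\ $\Spc(\cat K)^\vee$ is Noetherian, which is (c).

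For (b) $\Rightarrow$ (a), suppose (a) fails and let $\mathcal F$ denote the non-empty collection of radical tt-ideals of $\cat K$ that are not finitely generated, ordered by inclusion. I would first verify that $\mathcal F$ is Zorn-ready: the union $J = \bigcup J_\alpha$ of a chain $(J_\alpha)$ in $\mathcal F$ is again a radical tt-ideal (triangles, retracts, and tensoring pass through the chain, while radicality transfers because any relation $x^{\otimes n} \in J$ already holds in some $J_\alpha$), and if $J$ were finitely generated its finitely many generators would lie in a single $J_{\alpha^*}$, forcing $J = J_{\alpha^*}$ to be finitely generated. Zorn's lemma thus produces a maximal $J \in \mathcal F$, and it now suffices to show that $J$ is prime in order to contradict (b).

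The key step is the primality of $J$. If $J$ were not prime, pick $x, y \notin J$ with $x \otimes y \in J$. By maximality of $J$, the strictly larger radical tt-ideals $J_x := \sqrt{\thickt{J \cup \{x\}}}$ and $J_y := \sqrt{\thickt{J \cup \{y\}}}$ are both finitely generated, with supports of the form $\supp(g_1)$ and $\supp(g_2)$ respectively. Using that the basic closed subsets $\supp(g)$ of $\Spc(\cat K)$ form a distributive sublattice under $\cup, \cap$ with $\supp(g_1)\cap\supp(g_2) = \supp(g_1\otimes g_2)$, together with $\supp(x \otimes y) = \supp(x) \cap \supp(y) \subseteq \supp(J)$, one computes
\begin{align*}
\supp(g_1 \otimes g_2) &= \supp(g_1) \cap \supp(g_2) \\
&= \bigl(\supp(J) \cup \supp(x)\bigr) \cap \bigl(\supp(J) \cup \supp(y)\bigr) \\
&= \supp(J) \cup \supp(x \otimes y) = \supp(J).
\end{align*}
Hence $J = \sqrt{\thickt{g_1 \otimes g_2}}$ is finitely generated, contradicting $J \in \mathcal F$. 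The main obstacle I anticipate is setting up Zorn's lemma so that radicality and non-finite-generatedness both persist under chain unions; once those bookkeeping points are secured, the distributive-lattice computation above is the actual tt-geometric content of Cohen's theorem.
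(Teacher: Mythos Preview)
Your proof is correct. The treatment of (a) $\Leftrightarrow$ (c) matches the paper's: both translate finite generation into a constructibility/quasi-compactness condition on supports via Balmer's classification and Hochster duality. The genuine difference lies in (b) $\Rightarrow$ (a). The paper does not argue this implication inside $\cat K$ at all; instead it observes that $\supp(\cat P)^c = \gen(\cat P)$, so (b) translates to ``$\gen(\cat P)$ is constructible for every $\cat P$'', and then cites a purely point-set topological result (\cite[Proposition 8.1.11]{DST2019}) asserting that for any spectral space this condition is equivalent to inverse-Noetherianity. Your route instead runs Cohen's original Zorn argument directly in the tt-category, using the distributive lattice structure of the basic closed sets $\supp(g)$ to show that a maximal non-finitely-generated radical ideal is prime. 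Your approach is self-contained and makes the analogy with Cohen's theorem explicit at the level of ideals; the paper's approach is shorter and isolates the content as a statement about spectral spaces, at the cost of an external reference.
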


A novel point-set topological characterization of finite spectral spaces as those that are both weakly Noetherian and inverse-Noetherian then yields \cref{thmx:main}. 

When combined with the aforementioned theory of stratification for `big' tt-categories, we obtain as an immediate consequence that for tt-stratified rigidly-compactly generated tt-categories $\cat T$ with infinite spectrum there always exists an ideal of compacts in $\cat T$ which cannot be generated by finitely many objects. For instance, this applies to derived categories of Noetherian rings or schemes, various categories of representations \cite{BIK2011,BalmerGallauer2022pp,BBIKP2023pp,BCHNP2025}, or categories of homotopical or motivic origin, see e.g., \cite{BHS2023,BCHS2023pp, BalmerGallauer2025}. Further cases are studied throughout this note,  concluding with a novel example from global representation theory (\cref{ex:globalreptheory}) in which all ideals are finitely generated.

\subsection*{Topological preliminaries} We assume that the reader is familiar with the basic theory of spectral spaces including Hochster duality \cite{Hochster}, as for example summarized in \cite[Chapter 1]{DST2019}. For a spectral space $X$, the associated inverse (Hochster dual) space is denoted by $X_{\inv}$ and we write $X_{\cons}$ for the patch space of $X$, i.e., $X$ equipped with the corresponding constructible topology. If $\mathcal{P}$ is a property of spectral spaces, then we say that $X$ has \emph{inverse-$\mathcal{P}$} whenever $X_{\inv}$ satisfies $\mathcal{P}$.

By definition, a subset $W$ of a spectral space $X$ is \emph{Thomason} if it can be written as a union of complements of quasi-compact opens in $X$. We recall the following notions from \cite{BHS2023}:

\begin{Def}\label{def:wv+wN}
    Let $X$ be a spectral space. A subset $V$ in $X$ is called \emph{weakly visible} if there exists Thomason subsets $W_1,W_2$ in $X$ such that $V = W_1 \cap (X\setminus W_2)$; a point in $X$ is \emph{weakly visible} if the corresponding singleton is so. If every point of $X$ is weakly visible, we say $X$ is \emph{weakly Noetherian}.
\end{Def}

    Any spectral space which is Noetherian or Hausdorff is weakly Noetherian. The next result characterizes this property in terms of the inverse topology:

\begin{Lem}\label{lem:wvinverse}
    A subset of a spectral space $X$ is weakly visible if and only if it is locally closed in $X_{\inv}$.
\end{Lem}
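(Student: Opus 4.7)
The plan is to identify the topology of $X_{\inv}$ with the Thomason structure on $X$, after which the statement reduces to a tautology. First, I would recall the standard description coming from Hochster duality: a basis for the inverse topology on $X_{\inv}$ is given by the closed subsets of $X$ whose complement in $X$ is quasi-compact open. Taking arbitrary unions, the open subsets of $X_{\inv}$ are then precisely the Thomason subsets of $X$ in the sense defined just before \cref{def:wv+wN}; dually, the closed subsets of $X_{\inv}$ are the complements of Thomason subsets of $X$. A reference for this identification is \cite[Chapter 1]{DST2019}, or \cite{Hochster}.

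Given this translation, I would simply unwind the definition of locally closed. A subset $V \subseteq X$ is locally closed in $X_{\inv}$ if and only if $V = U \cap C$ with $U$ open and $C$ closed in $X_{\inv}$. Using the identification of the previous paragraph, this means precisely that $V = W_1 \cap (X \setminus W_2)$ for Thomason subsets $W_1, W_2$ of $X$, which by \cref{def:wv+wN} is exactly the condition that $V$ be weakly visible.

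The only real obstacle is citing the correct description of the open sets of $X_{\inv}$; once this is in place, both implications of the \emph{if and only if} are visibly the same condition.
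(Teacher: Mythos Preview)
Your proposal is correct and follows essentially the same approach as the paper: both arguments rest on the identification of Thomason subsets of $X$ with open subsets of $X_{\inv}$, after which the equivalence becomes a matter of unwinding definitions. Your write-up simply spells out in more detail what the paper compresses into two sentences.
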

\begin{proof}
    This follows from the fact that a subset of $X$ is Thomason if and only if it is inverse-open. Indeed, a subset is Thomason if and only if it is a union of complements of quasi-compact opens. Now quasi-compact opens form a basis of closed subsets for $X_{\inv}$, which verifies the claim.
\end{proof}

\begin{Lem}\label{lem:finitenesscharacterization}
    A spectral space $X$ is finite if and only if it is weakly Noetherian and inverse-Noetherian.
\end{Lem}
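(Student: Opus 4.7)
The plan is to use \cref{lem:wvinverse} to rephrase everything in terms of a single spectral space $Y := X_{\inv}$. Weakly Noetherian means every singleton of $X$ is weakly visible, i.e., locally closed in $Y$, while inverse-Noetherian means $Y$ itself is Noetherian; recall also that $Y$ is sober, being a spectral space. Since finiteness of $X$ is equivalent to finiteness of $Y$, the lemma reduces to proving
\begin{center}
 $(*)$ \emph{a Noetherian sober space $Y$ in which every point is locally closed is finite.}
\end{center}

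For the forward direction, if $X$ is finite then it is trivially Noetherian and hence weakly Noetherian; moreover $X_{\inv}$ has the same (finite) underlying set as $X$, so it too is Noetherian, and $X$ is inverse-Noetherian.

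For the backward direction, I would prove $(*)$ by Noetherian induction on the closed subsets of $Y$: assuming every proper closed subspace is finite, I want to show $Y$ is finite. Closed subspaces inherit sobriety, the Noetherian property, and the locally closed points condition (since locally closed is a local condition), so the inductive hypothesis will apply cleanly. Since $Y$ is Noetherian, it is a finite union of its irreducible components. If $Y$ has at least two components, each is a proper closed subset, hence finite by induction, and therefore $Y$ is finite. Otherwise $Y$ is irreducible, and by sobriety it has a unique generic point $\eta$ with $\overline{\{\eta\}} = Y$. The hypothesis that $\{\eta\}$ is locally closed then forces $\{\eta\}$ to be open in $Y$, so $Y \setminus \{\eta\}$ is a proper closed subset, finite by induction; hence $Y$ is finite.

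The bulk of the work is this topological statement $(*)$, and the step that requires the most care is the irreducible case: without sobriety one could not produce a distinguished generic point to remove, and without the \emph{locally closed} hypothesis the generic point need not be open, so one could not peel it off to invoke Noetherian induction. The rest of the argument is formal, but I would take care to check that the classes of spaces considered are closed under passing to closed subspaces, so that the Noetherian induction on $Y$ goes through without issue.
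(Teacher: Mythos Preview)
Your proof is correct and complete; the Noetherian induction on closed subspaces of $Y = X_{\inv}$ goes through as you describe, with the irreducible case handled by peeling off the generic point (which is open precisely because it is locally closed and dense).

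The paper takes a genuinely different route after the same translation via \cref{lem:wvinverse}. Rather than arguing by induction on $Y$, it passes to the constructible topology: citing \cite[Corollary~8.1.19(i)]{DST2019}, it observes that in a Noetherian spectral space every point being locally closed is equivalent to every point of the patch space being isolated, so $X_{\cons}$ is discrete; since $X_{\cons}$ is compact, $X$ is finite. Your argument is more elementary and fully self-contained, avoiding both the external reference and any mention of the constructible topology, at the cost of a slightly longer inductive argument. The paper's approach is terser but outsources the key topological content to the literature. Conceptually both rest on the same phenomenon---generic points become isolated under the hypotheses---but you make this explicit through the induction while the paper packages it into the passage to $X_{\cons}$.
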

\begin{proof}
    By \cref{lem:wvinverse}, $X$ is weakly Noetherian if and only if every point in $X_{\inv}$ is locally closed. The latter condition is equivalent to every point of $X_{\cons} = (X_{\inv})_{\cons}$ being isolated provided that $X_{\inv}$ is Noetherian, see \cite[Corollary 8.1.19(i)]{DST2019}. In other words, 
    $X$ being both weakly Noetherian and inverse-Noetherian implies that $X_{\cons}$ is discrete. But $X_{\cons}$ is compact, hence it must be finite, so $X$ is finite as well. Conversely, any finite spectral space is both Noetherian and inverse-Noetherian.
\end{proof}

\Cref{lem:finitenesscharacterization} strengthens one of the equivalences of \cite[Corollary 8.1.17]{DST2019}, by relaxing the assumption on the space from Noetherian to weakly Noetherian.

\subsection*{The main result} Throughout, we will freely use the language of tt-geometry, as originally developed by Balmer in \cite{Balmer2005}.

\begin{Def}\label{def:fgttideal}
    A radical ideal $\cat I$ in a tt-category $\cat K$ is said to be \emph{finitely generated} if there exists a finite collection of objects $X_1,\ldots, X_n \in \cat K$ such that $\cat I = \sqrt{\thickt{X_1,\ldots,X_n}}$. By passing to the direct sum of the $X_i$'s, we may equivalently require $\cat I$ to be \emph{principal}, i.e., generated by a single object in $\cat K$.
\end{Def}

The next proposition is our tt-geometric incarnation of Cohen's theorem \cite[Theorem 2]{Cohen1950}. We begin with an auxiliary result, probably well-known to the experts:

\begin{Lem}\label{lem:ttfg}
    A radical ideal $\cat I$ in an essentially small tt-category $\cat K$ is finitely generated if and only if the complement $\supp(\cat I)^c$ of its support in $\Spc(\cat K)$ is constructible. 
\end{Lem}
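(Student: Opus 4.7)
The plan is to leverage Balmer's classification theorem, which asserts that the support map $\cat I \mapsto \supp(\cat I)$ is a bijection between radical tt-ideals in $\cat K$ and Thomason subsets of $\Spc(\cat K)$. Under this correspondence, singly generated radical ideals $\sqrt{\thickt{Y}}$ go to the closed subsets $\supp(Y)$, whose complements $\{\mathfrak{p} \in \Spc(\cat K) : Y \in \mathfrak{p}\}$ are precisely the quasi-compact opens of $\Spc(\cat K)$, and every quasi-compact open arises in this way. In view of \cref{def:fgttideal}, after replacing $X_1,\ldots,X_n$ by their direct sum, the lemma therefore reduces to the purely point-set topological claim that for a Thomason subset $W$ of a spectral space $X$, the complement $W^c$ is constructible if and only if $W^c$ is quasi-compact open.

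For the forward direction, I would simply note that if $\cat I = \sqrt{\thickt{Y}}$, then $\supp(\cat I)^c = \{\mathfrak{p} : Y \in \mathfrak{p}\}$ is quasi-compact open and hence constructible. For the converse I would work in the Hochster dual $X_{\inv}$: since $W = \supp(\cat I)$ is Thomason, it is open in $X_{\inv}$, where the basic open subsets are the sets $X \setminus U$ for $U$ a quasi-compact open of $X$. Constructibility is self-dual under Hochster duality, and in any spectral space the quasi-compact opens coincide with the open constructible subsets; hence the assumption forces $W$ to be quasi-compact open in $X_{\inv}$. Any such set is a finite union $\bigcup_{i=1}^m (X \setminus U_i) = X \setminus \bigcap_{i=1}^m U_i$, so $W^c$ is a finite intersection of quasi-compact opens of $X$, and is thus itself quasi-compact open. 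Balmer's classification then supplies an object $Y \in \cat K$ with $\supp(\cat I)^c = \{\mathfrak{p} : Y \in \mathfrak{p}\}$, equivalently $\supp(\cat I) = \supp(Y)$, and the bijection between radical ideals and Thomason subsets upgrades this to $\cat I = \sqrt{\thickt{Y}}$.

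The principal obstacle is the Hochster-dual bookkeeping in the reverse direction, which amounts to identifying constructible Thomason subsets with complements of quasi-compact opens. Conceptually, the content of the lemma is that among Thomason subsets, constructibility — a finite-combinatorial property of the spectrum — corresponds to finite generation at the level of $\cat K$, and this is the natural compatibility once Balmer's bijection is in place.
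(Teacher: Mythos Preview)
Your proof is correct and follows the same overall strategy as the paper: both use Balmer's classification to reduce the lemma to the purely topological claim that a subset of the form $\supp(\cat I)^c$ is quasi-compact open if and only if it is constructible. The difference lies only in how this last step is handled. The paper cites directly the characterization \cite[Theorem~1.5.4(iii)]{DST2019} that in a spectral space a subset is quasi-compact open if and only if it is constructible and closed under generalization, and then simply observes that $\supp(\cat I)^c$ is automatically generalization-closed because $\supp(\cat I)$ is Thomason (hence specialization-closed). Your Hochster-dual argument---passing to $X_{\inv}$, using that constructibility is self-dual and that open constructible equals quasi-compact open there, and then translating back---is effectively a proof of that same characterization. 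So what you flag as the ``principal obstacle'' disappears in the paper's treatment by outsourcing it to a reference; your version is slightly longer but more self-contained.
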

\begin{proof}
    As remarked before, $\cat I$ is finitely generated if and only if there exists some $x \in \cat K$ such that $\cat I = \sqrt{\langle x \rangle_{\otimes}}$. By the classification theorem \cite[Theorem 4.10]{Balmer2005}, this equality in turn is equivalent to $\supp(\cat I) = \supp(x)$. Subsets of $\Spc(\cat K)$ of the form $\supp(x)$ for some $x \in \cat K$ are characterized by the property that their complements are quasi-compact open, see \cite[Proposition 2.14]{Balmer2005}. By \cite[Theorem 1.5.4(iii)]{DST2019}, a subset of a spectral space is quasi-compact open if and only if it is constructible and closed under generalization. Since any subset of the form $\supp(\cat I)^c$ is automatically closed under generalization, the claim follows.
\end{proof}

\begin{Prop}\label{prop:fgttcharacterization}
    For a tt-category $\cat K$, the following are equivalent:
        \begin{enumerate}
            \item $\Spc(\cat K)$ is inverse-Noetherian;
            \item every radical ideal in $\cat K$ is finitely generated;
            \item every prime ideal in $\cat K$ is finitely generated.
        \end{enumerate}
\end{Prop}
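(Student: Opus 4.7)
The plan is to establish (a) $\iff$ (b) by a direct topological translation via \cref{lem:ttfg}, and to obtain the non-trivial implication (c) $\implies$ (b) by a Zorn's lemma argument modeled on Cohen's classical proof; the implication (b) $\implies$ (c) is immediate since prime ideals are radical.

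For (a) $\iff$ (b), I would combine Balmer's classification with \cref{lem:ttfg}. Under the bijection $\cat I \mapsto \supp(\cat I)$, the radical ideals of $\cat K$ correspond to the Thomason subsets of $X := \Spc(\cat K)$, which, as recorded in the proof of \cref{lem:wvinverse}, are precisely the open subsets of $X_\inv$. \Cref{lem:ttfg} identifies the finitely generated radical ideals with those Thomason subsets whose complement is quasi-compact open in $X$; equivalently, with the ``basic'' opens of $X_\inv$, namely those of the form $X \setminus U$ for a single quasi-compact open $U$ of $X$. Since finite intersections of quasi-compact opens are again quasi-compact open in a spectral space, such basic opens are closed under finite union, so (b) is equivalent to every open of $X_\inv$ being quasi-compact, which is precisely $X_\inv$ being Noetherian, i.e., (a).

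For (c) $\implies$ (b), I would argue by contradiction. Assume that the collection $\cat S$ of radical ideals of $\cat K$ which are not finitely generated is non-empty. Any chain $\{\cat I_\alpha\}$ in $\cat S$ has union $\cat I := \bigcup_\alpha \cat I_\alpha$, which is again a radical ideal, and moreover $\cat I \in \cat S$: if instead $\cat I = \sqrt{\thickt{x}}$ for some $x \in \cat K$ then $x \in \cat I_\alpha$ for some $\alpha$, forcing $\cat I = \cat I_\alpha$ to be finitely generated, contradicting $\cat I_\alpha \in \cat S$. Zorn's lemma then yields a maximal element $\cat M \in \cat S$.

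The key step is then to show that $\cat M$ is prime, for then hypothesis (c) would force $\cat M$ to be finitely generated---the desired contradiction. If $\cat M$ were not prime, pick $a, b \in \cat K$ with $a, b \notin \cat M$ but $a \otimes b \in \cat M$, and set $\cat I_a := \sqrt{\thickt{\cat M, a}}$ and $\cat I_b := \sqrt{\thickt{\cat M, b}}$. These strictly contain $\cat M$, so by maximality they are finitely generated, say $\cat I_a = \sqrt{\thickt{x}}$ and $\cat I_b = \sqrt{\thickt{y}}$. A direct computation, using $a \otimes b \in \cat M$ and that $\cat M$ is a tensor ideal, shows $\thickt{\cat M, a} \otimes \thickt{\cat M, b} \subseteq \cat M$. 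Combined with the standard identity $\cat I_a \cap \cat I_b = \sqrt{\cat I_a \otimes \cat I_b}$ for radical tt-ideals from \cite{Balmer2005}, this yields $\cat M = \cat I_a \cap \cat I_b = \sqrt{\thickt{x \otimes y}}$, which is principal---contradicting $\cat M \in \cat S$. The main subtlety I anticipate is careful bookkeeping of radicals in these tensor-ideal manipulations; once the identity $\cat I \cap \cat J = \sqrt{\cat I \otimes \cat J}$ is in hand, the remainder is a direct tt-geometric adaptation of Cohen's classical proof.
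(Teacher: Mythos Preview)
Your argument for $(a) \Leftrightarrow (b)$ matches the paper's: both translate via \cref{lem:ttfg} to the statement that every inverse-open subset of $\Spc(\cat K)$ has constructible complement (equivalently, is inverse-quasi-compact), which characterizes inverse-Noetherianness; the paper phrases this via \cite[Proposition~8.1.11]{DST2019} while you argue it directly, but the content is the same.

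For $(c) \Rightarrow (b)$, however, you take a genuinely different route. The paper stays on the topological side throughout: it uses \cref{lem:ttfg} to rewrite $(c)$ as ``$\gen(\cat P)$ is constructible for every $\cat P$'' and then cites \cite[Proposition~8.1.11]{DST2019} for the equivalence of this with inverse-Noetherianness, closing the loop with no further categorical input. You instead run Cohen's classical Zorn argument internally to $\cat K$, showing that a maximal non-finitely-generated radical ideal must be prime via the identity $\sqrt{\thickt{x}} \cap \sqrt{\thickt{y}} = \sqrt{\thickt{x \otimes y}}$. Your approach is self-contained and makes the analogy with Cohen's theorem transparent at the level of the proof itself; the paper's route is shorter and cleanly isolates the nontrivial step as a known fact about spectral spaces. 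The radical bookkeeping you flag does go through: since $\cat M$ is radical, for $u \in \cat I_a$ and $v \in \cat I_b$ one has $(u \otimes v)^{\otimes N} \in \thickt{\cat M, a} \otimes \thickt{\cat M, b} \subseteq \cat M$ for $N$ large, whence $u \otimes v \in \cat M$ and thus $\cat I_a \cap \cat I_b = \cat M$.
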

\begin{proof}
    A subset of $\Spc(\cat K)$ is the support of a radical ideal in $\cat K$ if and only if it is a Thomason subset, i.e., if and only if it is inverse-open. In other words, a subset of $\Spc(\cat K)$ is of the form $\Supp(\cat I)^c$ for some radical ideal $\cat I \subseteq \cat K$ if and only if it is inverse-closed. \Cref{lem:ttfg} therefore shows that all radical ideals in $\cat K$ are finitely generated if and only if all inverse-closed subsets of $\Spc(\cat K)$ are constructible. Similarly, for a prime ideal $\cat P$, we have $\supp(\cat P)^c = \gen(\cat P)$, the generalization closure of $\cat P$. This means that all prime ideals in $\cat K$ are finitely generated if and only if all subsets of the form $\gen(\cat P)$ in $\Spc(\cat K)$ are constructible, again using \cref{lem:ttfg}. 
    
    Applying \cite[Proposition 8.1.11]{DST2019} to $\Spc(\cat K)$, specifically the equivalence of (i), (iii), and (v) there, we see that the following conditions are equivalent:
        \begin{enumerate}
            \item[$(a)$] $\Spc(\cat K)$ is inverse-Noetherian;
            \item[$(b')$] every inverse-closed subset in $\Spc(\cat K)$ is constructible;
            \item[$(c')$] $\gen(\cat P)$ is constructible for each $\cat P \in \Spc(\cat K)$.
        \end{enumerate}
    The previous paragraph says that $(b)$ and $(b')$ are equivalent, as are $(c)$ and $(c')$. 
\end{proof}

With all the pieces in place, we can now assemble the proof of our main result:

\begin{Thm}\label{thm:main}
    Let $\cat K$ be an essentially small tt-category. Then the following conditions are equivalent:    
        \begin{enumerate}
            \item $\Spc(\cat K)$ is weakly Noetherian and every radical ideal in $\cat K$ is finitely generated;
            \item $\Spc(\cat K)$ is weakly Noetherian and every prime ideal in $\cat K$ is finitely generated;
            \item $\Spc(\cat K)$ is finite.
        \end{enumerate}
\end{Thm}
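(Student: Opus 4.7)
The plan is to combine the two main preparatory results, \cref{prop:fgttcharacterization} and \cref{lem:finitenesscharacterization}, both of which have already been established. The proof essentially writes itself once these are in hand.

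First I would use \cref{prop:fgttcharacterization} to rewrite conditions $(a)$ and $(b)$ purely in topological terms. That proposition tells us that every radical ideal being finitely generated, every prime ideal being finitely generated, and $\Spc(\cat K)$ being inverse-Noetherian are all equivalent, with no hypothesis on the spectrum. Consequently $(a)$ is equivalent to the conjunction ``$\Spc(\cat K)$ is weakly Noetherian and inverse-Noetherian'', and the same conjunction is equivalent to $(b)$. In particular $(a) \Leftrightarrow (b)$ with no further work.

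Next I would invoke \cref{lem:finitenesscharacterization}, which identifies precisely this conjunction with finiteness of $\Spc(\cat K)$. This gives the equivalence of the common reformulation of $(a)$ and $(b)$ with condition $(c)$, completing the cycle. The only things worth checking are that the implications ``$(c) \Rightarrow$ weakly Noetherian'' and ``$(c) \Rightarrow$ inverse-Noetherian'' hold, but both are immediate: a finite spectral space is trivially Noetherian, and its inverse is again finite hence Noetherian, so \cref{prop:fgttcharacterization} then supplies the finite generation statements from $(a)$ and $(b)$.

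There is no real obstacle here, as the substantive content has been split off into the two lemmas; the main theorem is their formal combination. If I were being especially careful I would merely note that the hypothesis of essential smallness enters only through \cref{lem:ttfg}, which is used in \cref{prop:fgttcharacterization} to translate finite generation of a radical ideal into constructibility of the complement of its support, and is harmless for condition $(c)$.
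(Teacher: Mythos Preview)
Your proposal is correct and follows exactly the same route as the paper: reduce $(a)$ and $(b)$ via \cref{prop:fgttcharacterization} to ``weakly Noetherian and inverse-Noetherian'', then conclude via \cref{lem:finitenesscharacterization}. The additional remarks you make about the easy direction $(c)\Rightarrow(a),(b)$ and the role of essential smallness are accurate elaborations but not needed beyond what those two results already provide.
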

\begin{proof}
    \Cref{prop:fgttcharacterization} shows that each of $(a)$ or $(b)$ are equivalent to the statement that $\Spc(\cat K)$ is weakly Noetherian and inverse-Noetherian. This in turn is equivalent to $\Spc(\cat K)$ being finite by \cref{lem:finitenesscharacterization}, which is $(c)$.
\end{proof}

\begin{Rem}\label{rem:allideals}
    Observe that the spectrum of an essentially small tt-category $\cat K$ is finite if and only if the set of all radical ideals in $\cat K$ is finite. Indeed, radical ideals are in bijection with inverse-open subsets of $\Spc(\cat K)$. The claim then follows because the set of opens of a spectral space $X$ is finite if and only if $X$ is finite.  
\end{Rem}

\subsection*{Application to tt-stratification}
A tt-category $\cat T$ is said to be \emph{rigidly-compactly generated} if it is compactly generated by its full subcategory of compact objects $\cat T^{\omega}$ and if the compact objects in $\cat T$ coincide with the dualizable ones (i.e., $\cat T^{\omega}$ is \emph{rigid}). The next definition recalls the notion of tt-stratification introduced in \cite{BHS2023} under the assumption that the spectrum is weakly Noetherian. Subsequently, this was generalized by Zou \cite{Zou2023pp} to arbitrary rigidly-compactly generated tt-categories, building on earlier work of Sanders \cite{Sanders2017pp} extending Balmer--Favi support \cite{BalmerFavi2011}.  

\begin{Def}\label{def:ttstratification}
    Suppose $\cat T$ is a rigidly-compactly generated tt-category. We say that $\cat T$ is \emph{tt-stratified} if the Balmer--Favi--Sanders notion of support induces a bijection
        \[
            \Supp\colon \big\{ \text{localizing ideals of $\cat T$} \big\} \xra{\cong} \big\{ \text{subsets of $\Spc(\cat T^{\omega})$}\big\}.
        \]
\end{Def}

    As a consequence of our main theorem, tt-stratified categories must contain an ideal of compacts that cannot be finitely generated unless the spectrum is finite; examples abound, see e.g., \cite{BIK2011,BBIKP2023pp,BCHS2023pp,BCHNP2025,BalmerGallauer2025}.

\begin{Cor}\label{cor:ttstratifiedfg}
    Let $\cat T$ be a tt-stratified tt-category. If $\Spc(\cat T^{\omega})$ is infinite, then there exists a prime ideal in $\cat T^{\omega}$ which cannot be generated by finitely many objects.
\end{Cor}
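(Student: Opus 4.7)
The plan is to reduce the corollary to a direct application of \cref{thm:main} to the essentially small tt-category $\cat K = \cat T^{\omega}$. The only two ingredients required are: (i) that $\Spc(\cat T^{\omega})$ is weakly Noetherian, so that the equivalences of \cref{thm:main} are in force, and (ii) that the spectrum is infinite, so that the finiteness condition $(c)$ fails.

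For (i), I would appeal to tt-stratification. In the setup of \cite{BHS2023} the weakly Noetherian hypothesis on $\Spc(\cat T^{\omega})$ is part of the framework, and under the more permissive conventions of \cite{Zou2023pp} it is proved that weakly Noetherian is in fact a \emph{necessary} condition for the support bijection in \cref{def:ttstratification} to hold. Either way, tt-stratification of $\cat T$ forces $\Spc(\cat T^{\omega})$ to be weakly Noetherian. For (ii), this is given by hypothesis.

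Combining these, $\cat K = \cat T^{\omega}$ is essentially small, $\Spc(\cat K)$ is weakly Noetherian, and $\Spc(\cat K)$ is not finite, so condition $(c)$ of \cref{thm:main} fails. The contrapositive of the equivalence $(b)\Leftrightarrow (c)$ in \cref{thm:main} then yields that not every prime ideal in $\cat T^{\omega}$ is finitely generated, which produces the required prime ideal.

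There is no real obstacle here: the work has all been done in \cref{prop:fgttcharacterization} and \cref{lem:finitenesscharacterization}, and the only thing worth emphasizing is the input that stratification propagates the topological hypothesis (weakly Noetherian) needed to invoke \cref{thm:main}. If one preferred to avoid citing Zou's converse, one could alternatively quote \cref{prop:fgttcharacterization} directly, since the equivalence $(b)\Leftrightarrow(c)$ there is unconditional on the spectrum, and then use the weakly Noetherian hypothesis built into \cite{BHS2023}-style stratification to get the sharper statement via \cref{lem:finitenesscharacterization}.
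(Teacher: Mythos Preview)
Your proposal is correct and follows essentially the same route as the paper: invoke Zou's result that tt-stratification forces $\Spc(\cat T^{\omega})$ to be weakly Noetherian, then apply the contrapositive of \cref{thm:main} to the infinite spectrum. The paper's proof is just a terser version of exactly this argument.
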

\begin{proof}
    Zou \cite[Theorem 8.13]{Zou2023pp} proved that a tt-stratified rigidly-compactly generated tt-category must necessarily have a weakly Noetherian spectrum. If $\Spc(\cat T^{\omega})$ is infinite, then \cref{thm:main} implies that $\cat T^{\omega}$ contains a prime ideal that cannot be finitely generated.
\end{proof}

\subsection*{Examples}
Finally, we collect a number of examples that illustrate our main results, highlighting the key phenomena. The first example demonstrates a key difference between Cohen's theorem in commutative algebra and its tt-geometric counterpart.

\begin{Exa}\label{ex:commutativerings}
    Let $R$ be a commutative ring and write $\cat D(R)$ for its (unbounded) derived category. By the Hopkins--Neeman--Thomason theorem \cite{Neeman92,Thomason1997}, there is a natural homeomorphism
        \[
            \Spc(\cat D(R)^{\omega}) \cong \Spec(R).
        \]
    If $R$ is Noetherian, then the Zariski spectrum $\Spec(R)$ is Noetherian as a topological space, so \cref{thm:main} applies: every (prime) ideal in $\cat D(R)^{\omega}$ is finitely generated if and only if there are only finitely many prime ideals in $R$ (so $\dim(R) \leq 1$). This uses that $\cat D(R)$ is rigidly-compactly generated so that every ideal in $\cat D(R)^{\omega}$ is radical.

    This marks a sharp contrast between the tt-geometry of $\cat D(R)$ and the commutative algebra of $R$: By the aforementioned theorem of Cohen, a commutative ring in which all prime ideals are finitely generated is Noetherian, but this property cannot be detected in the Zariski spectrum. Indeed, there are many non-Noetherian rings whose spectrum is a point, such as $k[x_1,x_2,\ldots]/(x_1^2,x_2^2,\ldots)$ or variations thereof. However, we emphasize that the finite generation of all prime ideals in a given tt-category should \emph{not} be viewed as a tt-theoretic analogue of being Noetherian.
\end{Exa}

\begin{Exa}\label{ex:equivariantspectra}
    Let $G$ be a compact Lie group and consider the category $\Sp_G$ of $p$-local $G$-spectra, with the prime number $p$ being implicit. Geometric fixed points induce a jointly conservative family of geometric functors
        \[
            \Phi \coloneqq (\Phi^H \colon \Sp_G \to \Sp)_{(H)}
        \]
    indexed on conjugacy classes $(H)$ of subgroups of $G$. By \cite[Theorem 1.4]{BCHS2024}, this results in a surjective (in fact: bijective \cite[Theorem 3.14]{BGH2020}) map
        \[
            \varphi \coloneqq \Spc(\Phi)\colon \textstyle\bigsqcup_{(H)}\Spc(\Sp^{\omega}) \to \Spc(\Sp_G^{\omega}).
        \]
    When $G$ is finite, the source is a finite union of inverse-Noetherian spaces, hence itself inverse-Noetherian. It follows that $\Spc(\Sp_G^{\omega})$ is also inverse-Noetherian in this case. We deduce from \cref{prop:fgttcharacterization} that all ideals of $\Sp_G^{\omega}$ are finitely generated when $G$ is finite. In contrast, this fails for positive dimensional compact Lie groups, see \cite[Example 5.5]{BGH2020} for an explicit example for $G = U(1)$.

    More generally, rationalization furnishes an embedding
        \[
            \Spc(\Sp_G^{\omega}) \hookleftarrow \Spc(\Sp_{G,\bbQ}^{\omega}).
        \]
    The spectral space $\Spc(\Sp_{G,\bbQ}^{\omega})$ is generically Noetherian hence also weakly Noetherian by \cite[Theorem D]{BalchinBarthelGreenlees2023pp}, with underlying patch space $\Spc(\Sp_{G,\bbQ}^{\omega})_{\cons}$ the space of conjugacy classes of closed subgroups of $G$ with the Hausdorff metric topology (\cite[Theorem B]{BalchinBarthelGreenlees2023pp}). In particular, $\Spc(\Sp_{G,\bbQ}^{\omega})$ is finite if and only if $G$ is finite. Therefore, if $G$ is not a finite group, then $\Spc(\Sp_{G,\bbQ}^{\omega})$ is not inverse-Noetherian by \cref{lem:finitenesscharacterization}, so $\Spc(\Sp_G^{\omega})$ cannot be inverse-Noetherian either. In summary, we deduce abstractly from \cref{prop:fgttcharacterization} that the tt-category $\Sp_G^{\omega}$ contains an ideal that cannot be finitely generated if and only if $G$ is infinite.
 
    A similar analysis applies to the category of equivariant spectra for a profinite group, see \cite{BBB2024pp}, or excisive functors from spectra to spectra, see \cite{ABHS2024pp}.
\end{Exa}

We finish this note with an example that in fact provided the initial inspiration for studying when all radical ideals in a tt-category are finitely generated. 

\begin{Exa}\label{ex:globalreptheory}
    Write $\cat E_p$ for the family of finite elementary abelian $p$-groups for some prime number $p$. Let $\Sp_{\cat E_p}$ be the tt-category of global spectra for the family $\cat E_p$, as introduced by Schwede \cite{Schwedebook}, and consider its rationalization $\cat D(\cat E_p) \coloneqq \Sp_{\cat E_p,\bbQ}$. Even though $\cat D(\cat E_p)^{\omega}$ is not rigid, it is \emph{standard} in the sense that every ideal in $\cat D(\cat E_p)^{\omega}$ is radical. In forthcoming joint work with Barrero, Pol, Strickland, and Williamson, we construct a homeomorphism
        \begin{equation}\label{eq:globalreptheory}
            \Spc(\cat D(\cat E_p)^{\omega}) \cong \Spec(\bbZ)_{\inv}. 
        \end{equation}
    In particular, the spectrum of $\cat D(\cat E_p)^{\omega}$ is infinite and inverse-Noetherian. Our proof also shows that every prime ideal in $\cat D(\cat E_p)^{\omega}$ is principal, which alternatively follows abstractly from \cref{prop:fgttcharacterization}. Moreover, \cref{thm:main} implies that $\Spc(\cat D(\cat E_p)^{\omega})$ cannot be weakly Noetherian, which alternatively could be deduced from \cref{eq:globalreptheory}.
\end{Exa}

\subsection*{Acknowledgements}
I would like to thank Scott Balchin for many related discussions on the question to what extent structural properties of a tt-category are reflected in the topology of its spectrum, as well as Paul Balmer, Drew Heard, and Beren Sanders for helpful comments. TB is grateful to the Max Planck Institute for Mathematics, is supported by the ERC under Horizon Europe~(grant~No.~101042990), and would like to thank the Isaac Newton Institute for Mathematical Sciences, Cambridge, for support and hospitality during the programme `Equivariant homotopy theory in context', where work on this paper was undertaken. This work was supported by EPSRC grant EP/Z000580/1.

\bibliographystyle{alpha}
\bibliography{reference}

\end{document}